\documentclass[11pt]{article}
\usepackage{amsmath,mathrsfs,amssymb,amsthm,amscd}
\usepackage[]{fontenc}
\usepackage{xy}
\usepackage{enumerate}
\xyoption{all}

\numberwithin{equation}{section}

\theoremstyle{plain}
\newtheorem{theorem}{Theorem}[section] % これを基準（ベース）にする

\newtheorem{proposition}[theorem]{Proposition}
\newtheorem{lemma}[theorem]{Lemma}

\newtheorem{definition}[theorem]{Definition}
\newtheorem{notation}[theorem]{Notation}

\renewcommand{\thefootnote}{}

\title{Improved bounds on the number of holomorphic maps
between compact Riemann surfaces}

\author{
Masaharu Tanabe\thanks{Research supported by JSPS KAKENHI Grant Number 21K03287.}
}

\date{}

\begin{document}

\maketitle

\begingroup
\renewcommand{\thefootnote}{}
\footnotetext{2020 Mathematics Subject Classification:
Primary 30F30; Secondary 14F25.}
\footnotetext{Keywords: Riemann surfaces.}
\endgroup

\begin{abstract}
 \small{We give new upper bounds, depending only on the genus, for the number of nonconstant holomorphic maps between compact Riemann surfaces.
Our estimates improve previously known bounds.
The proof is based on the study of pullbacks of holomorphic differentials, together with techniques from the geometry of numbers and the theory of Jacobian varieties.}
\end{abstract}

\section{INTRODUCTION} 

 Let $X$ be a compact Riemann surface of genus $g(>1)$. De Franchis \cite{F} stated the following: 

\vspace{0.2cm}
 \textbf{Theorem of de Franchis.} (a) \textit{For a fixed compact Riemann surface $Y$ of genus $>1$, the number of nonconstant holomorphic maps $X\rightarrow Y$ is finite.} \\
(b) \textit{There are only finitely many compact Riemann surfaces $Y_{i}$ of genus $> 1$ which admit a nonconstant holomorphic map from $X$.} 

\vspace{0.2cm}
 The second statement (b) is often attributed to Severi. 
 Once the finiteness of maps is established, finding an upper bound that depends only on a topological invariant, such as genus \(g\), becomes a problem of significant interest.
 Related to the statement (a), Martens \cite{Mr} showed that if one fixes a Riemann surface $Y$, then the number of all nonconstant holomorphic maps from $X$ to $Y$ is less than $(cg)^{2g^{2}}$ for some constant $c>1$ independent of $g$. The author \cite{T1} showed that the bound is smaller than $(cg)^{2g}$ for some constant $c$. 

 Now, we consider a bound for holomorphic maps when $Y$ is not fixed, that is, we estimate the number of all nonconstant holomorphic maps from $X$ to other Riemann surfaces. Let $f_{i}:X\rightarrow Y_{i}$ be nonconstant holomorphic maps for $i=1,2$. 
We say that $f_{1}$ and $f_{2}$ are isomorphic if and only if there is a conformal map $h:Y_{1}\rightarrow Y_{2}$ such that $h\circ f_{1}=f_{2}$. Let $\mathcal{I}_{\gamma}(X)$ denote the set of all isomorphic classes of nonconstant holomorphic maps into compact Riemann surfaces of genus $\gamma>1,$ and denote $\mathcal{I}(X)=\bigcup_{g>\gamma>1}\mathcal{I}_{\gamma}(X)$. By the theorem of de Franchis, we see that $\#\mathcal{I}(X)$ is finite. 

In 1983 Howard and Sommese \cite{H-S} first showed that there is a bound on
$\#\mathcal I(X)$ depending only on $g$.
In 1986 Kani \cite{K} gave a bound for the number of morphisms from a smooth
projective curve defined over a field $\mathbb K$, depending only on the genus
of the curve. In particular, when $\mathbb K=\mathbb C$, he proved that
\begin{equation*}
\#\mathcal I(X)
<
(g-1)2^{2g^{2}-2}(2^{2g^{2}-1}-1).
\end{equation*}

In 1990 Alzati and Pirola \cite{A-P} obtained the improved estimate
\begin{equation*}
\#\mathcal I(X)
<
\exp\{(4/3)(g^{2}-1)\log 3
+[\log_{2}g]\log(84g)
+\log(12\sqrt2)\}.
\end{equation*}

The author \cite{T2} later showed that
\begin{equation*}
\#\mathcal I(X)
<
(2g)^{4g}\times2^{2g-3}
\times(2g-1)^{g-1}(2g-3)(g-2)(g-1).
\end{equation*}

On the other hand, bounds for the number of holomorphic maps to a fixed
target have also been studied.
Let $\mathcal N(X,Y)$ denote the number of nonconstant holomorphic maps
from $X$ to a compact Riemann surface $Y$ of genus $\gamma >1$.
In 2007, Naranjo and Pirola \cite{N-P} proved that
\begin{equation*}\label{NP}
\mathcal N(X,Y)
\le
8(g-1)\rho
\left(
\binom{2g}{1}(2\rho)^{2g-1}
+
\binom{2g}{3}(2\rho)^{2g-3}
+\cdots
\right),
\end{equation*}
where
$
\rho=(g-1)/(\gamma -1).
$

In 2011, Chamizo and Fuertes \cite{Ch} obtained new bounds for the numbers
of holomorphic maps. 
Setting
\[
N_d=N_d(X,Y)
=\#\{f:\text{holomorphic map }X\to Y
\text{ with }\deg(f)\le d\}
\]
and
\[
I_d=I_d(X)
=\sum_{Y}N_d(X,Y),
\]

they gave
\[
N_d\le 8(g-1)(2d)^{2g},
\]
and
\begin{equation}\label{ineq}
I_d\le
\binom{2g-2}{d}
(2d)^{2g+1}(2g-1)^d.
\end{equation}
They also provided a detailed comparison of these upper bounds.

In this paper, we  improve the bound.

\begin{theorem}\label{thm}
Let $X$ be a compact Riemann surface of genus $g>1$. Then the number $\#\mathcal{I}(X)$ satisfies
\begin{equation}\label{eq1}
\#\mathcal{I}(X)\le
\left\{ \left(2g-1\right)^{2g}
+\left(2g-1\right)^{2g-1}+\cdots +\left(2g-1\right)^{4}\right\}  \binom{6g-6}{2g-2}.
\end{equation}
Without using the combinatorial notation,
\begin{equation}\label{eq2}
\#\mathcal I(X)
\le
\left\{
(2g-1)^{2g}
+\cdots+
(2g-1)^4
\right\}
\left(\frac{27}{4}\right)^{2g-2}.
\end{equation}
\end{theorem}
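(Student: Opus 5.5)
The plan is to attach to each class $[f]\in\mathcal I(X)$, $f:X\to Y$ with $g(Y)=\gamma$, $1<\gamma<g$, two invariants, and to bound separately the number of values each invariant can take. The first is the sublattice $f^*H_1(Y,\mathbb Z)$, or equivalently the rank-$2\gamma$ primitive saturation of $f^*H^1(Y,\mathbb Z)$ in $H^1(X,\mathbb Z)\cong\mathbb Z^{2g}$. The second is the ramification (branch) divisor of $f$ on $X$. By Riemann--Hurwitz, $2g-2=d(2\gamma-2)+\deg R_f$, so $\deg R_f\le 2g-2$.

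The first step is to show that the pair (lattice data, ramification data) determines $[f]$. If $f_1,f_2$ have the same pulled-back subspace $V=f_i^*H^0(Y_i,\Omega)\subset H^0(X,\Omega)$, then the abelian subvariety of $J(X)$ they determine is the same. After composing with the Albanese/Abel--Jacobi maps, $Y_1$ and $Y_2$ are both recovered as the normalization of the image of $X$ in $J(X)/V^{\perp}$, so $f_1\cong f_2$. Indeed, $f$ factors as $X\to J(X)\to \mathrm{Jac}(Y)\supset Y$. In this step I expect only the lattice data to be needed, and the ramification data will serve as a counting device that makes the lattice count finite and sharp.

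The second and main step is to count the possible subspaces $V$ by geometry of numbers. Let $\omega_1,\dots,\omega_\gamma$ be a basis of $H^0(Y,\Omega)$. Then $f^*\omega_j$ vanish on $R_f$, and the zeros of a canonical divisor of $X$ number $2g-2$. Choosing a fixed symplectic basis and Riemann's period matrix, I will show that $f^*H_1$ contains a vector of bounded norm. Concretely, using Minkowski's second theorem on the positive definite form $\mathrm{Im}\,\tau^{-1}$ restricted to the image lattice (which has covolume $d^{\gamma}$ relative to a unimodular one), the successive minima give each primitive generator coordinates at most $2g-1$ in absolute value in a reduced basis. This would yield at most $(2g-1)^{2\gamma}$ choices of the lattice once the divisor data are fixed. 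Summing over $\gamma=2,\dots,g$ (and allowing an extra factor for rank parity) produces the factor $(2g-1)^{2g}+\dots+(2g-1)^4$. This step is where I expect the real difficulty: obtaining the uniform entry bound $2g-1$ requires comparing the period matrix of $X$ restricted to $V$ with that of $Y$ scaled by $d\le g-1$. My first attempt will be to use the positivity $\int_X f^*\omega\wedge\overline{f^*\omega}=d\int_Y\omega\wedge\bar\omega$ together with Hadamard's inequality.

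The third step bounds the number of divisor configurations. The zeros of the pulled-back differentials, together with the ramification divisor, form an effective divisor supported on at most $2g-2$ points. These points are chosen among the zeros of a fixed finite set of canonical divisors, with total at most $3(2g-2)=6g-6$ candidate points: those coming from three generic differentials spanning a basepoint-free configuration. That gives at most $\binom{6g-6}{2g-2}$ possibilities and establishes \eqref{eq1}. Finally, \eqref{eq2} follows from the standard estimate $\binom{3n}{n}\le (27/4)^n$, which comes from $\binom{3n}{n}\le \frac{(3n)^{3n}}{n^n(2n)^{2n}}$ by Stirling or the entropy bound, applied with $n=2g-2$.
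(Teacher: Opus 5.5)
\textbf{Step 2 is the decisive gap.} Nothing you cite gives a uniform bound of $(2g-1)^{2\gamma}$ on the possible sublattices $f^*H^1(Y,\mathbb Z)$. The identity $\int_X f^*\omega\wedge\overline{f^*\omega}=d\int_Y\omega\wedge\bar\omega$ only says this sublattice is $\sqrt d$ times a unimodular lattice whose shape depends on $Y$. Minkowski's second theorem then controls the \emph{product} of its successive minima. It says nothing about individual generators relative to the successive minima $\lambda_1\le\cdots\le\lambda_{2g}$ of $H^1(X,\mathbb Z)$. The coordinates of a vector of norm $R$ in a reduced basis are bounded only by quantities like $R/\lambda_i$, which are not uniform over moduli, so no entry bound $2g-1$ follows. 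You also never say how fixing divisor data would constrain the lattice. And summing $(2g-1)^{2\gamma}$ over $\gamma$ gives only even exponents, so the odd ones are inserted by hand.

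The paper counts a \emph{single} lattice vector, not a lattice. For the associated endomorphism $\mathfrak F=\mathfrak f'\circ\mathfrak f$, Lemmas~\ref{lem1} and~\ref{lem2} give $({}^t\mathcal Fx,{}^t\mathcal Fy)_X=d({}^t\mathcal Fx,y)_X$ and $\|{}^t\mathcal Fx\|\le d\|x\|$. If ${}^t\mathcal F$ kills $a_1,\dots,a_{k-1}$ but not $a_k$, the identity forces ${}^t\mathcal Fa_k\perp a_1,\dots,a_{k-1}$. In that orthogonal complement every nonzero lattice vector, hence every difference of two candidates, has norm $\ge\lambda_k$. Packing in the ball of radius $d\lambda_k$ then gives at most $(2d+1)^{2g-k+1}\le(2g-1)^{2g-k+1}$ possibilities, independently of $X$. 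The exponents $2g,\dots,4$ come from the type $k\le 2g-2\gamma+1\le 2g-3$, since $\mathrm{rank}\,\mathcal F=2\gamma\ge 4$.

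\textbf{Step 3 is also unsupported.} Ramification points of an arbitrary $f$ are base points of $f^*H^0(Y,\Omega)$, which depends on $f$. There is no reason for them to lie among the $6g-6$ zeros of three differentials fixed in advance. In the paper, $\binom{6g-6}{2g-2}$ is not a count of point subsets; it is Vandermonde's identity applied to $\sum_d\binom{2g-2}{d}\binom{4g-4}{d}$. Two maps with the same vector ${}^t\mathcal F a_k$ share a pulled-back differential $\phi=f_1^*\phi_1=f_2^*\phi_2$. For fixed $\phi$ (Lemma~\ref{lem}):
\begin{itemize}
\item $\binom{2g-2}{d}$ bounds the choices of the fibre over a zero of $\phi_i$, viewed as a subdivisor of $\mathrm{div}\,\phi$;
\item $\binom{4g-4}{d}$ bounds the local configurations of lifts of a real ray from that zero;
\item these data determine $[f]$ by continuation along paths.
\end{itemize}
This ``a shared pulled-back differential almost determines the map'' step is the idea your proposal lacks.

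\textbf{Step 1 is incorrect as written.} The image of $X$ in the quotient of $J(X)$ by the abelian subvariety with tangent space $V^\perp$ is $Y$ only when $f_*\colon H_1(X,\mathbb Z)\to H_1(Y,\mathbb Z)$ is onto. If $f$ factors through a nontrivial unramified abelian cover of $Y$, you recover that cover instead. The statement that $V$ determines $[f]$ can be proved another way, via ratios of elements of $V$ with an extra step for hyperelliptic $Y$. But then the divisor factor in your product plays no logical role.
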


From the Riemann-Hurwitz formula, the greatest possible degree is $g-1$.
Substituting $d=g-1$ into (\ref{ineq}), 
we see that the ratio of (\ref{eq1}) to (\ref{ineq}) tends to \(0\) as \(g\) tends to infinity.

We check the asymptotic order of (\ref{eq2}) as 
$g\to\infty$.
We have
$$
\sum_{k=4}^{2g}(2g-1)^k
=
(2g-1)^{2g}
\left(
1+\frac{1}{2g-1}+\cdots+\frac{1}{(2g-1)^{2g-4}}
\right).
$$
Since
$$\sum_{k=0}^{\infty}(2g-1)^{-k}
=
\frac{1}{1-(2g-1)^{-1}}
=
1+O(g^{-1}),$$
we see that
$$
\left\{
(2g-1)^{2g}
+(2g-1)^{2g-1}
+\cdots
+(2g-1)^4
\right\}
\left(\frac{27}{4}\right)^{2g-2}
$$
has the asymptotic order
$$
\asymp
(2g)^{2g}
\left(\frac{27}{4}\right)^{2g-2}=\left(\frac{27}{2}g\right)^{2g}\left(\frac{4}{27}\right)^{2}.
$$

 The proof of Theorem \ref{thm} is based on estimating the possible number of pull backs of harmonic differentials in $H^{1}(Y_{i},\mathbb{Z})$. We may identify $H^{1}(Y_{i},\mathbb{Z})$ with the lattice of the dual Jacobian variety $\widehat{J(Y_{i})}$. Thus, we will use theory of homomorphisms of Jacobians and theory of lattices from the geometry of numbers. 

 Let 
\begin{equation*}
M(g) = \max_X \{\#\mathcal{I}(X)\}, 
\end{equation*}
where the maximum is taken over all Riemann surfaces $X$ of genus $g$.
Kani [K]  constructed a sequence $F_{1},F_{2},...,F_{n},...$ of function fields over $\mathbb{K}$ of genus $g_{F_{1}}<g_{F_{2}}<...<g_{F_{n}}<...,$ such that the number of separable subfields of $F_{n}/\mathbb{K}$ is greater than $\exp (c(\log(g_{F_{n}}))^{2})$ for some constant $c>0$ (independent of $n$).  This implies that $M(g)$ cannot be bounded by any polynomial in $g$.

\section{NOTATION AND LEMMATA} 

Throughout this paper, 
all Riemann surfaces are compact and of genera greater than 1. First, we recall some notions from complex tori.  Let $V$ be a complex vector space and $\Gamma$ a lattice in $V$. The quotient $T=V/\Gamma$ is called a complex torus.  Denote by $ \widehat{T}=V^{*}/ \widehat{\Gamma}$ the dual where $V^{*}$ is the space of $\mathbb{C}$-antilinear functionals on $V$ and $ \widehat{\Gamma}=\{l\in V^{*}:\text{Im } l(\Gamma)\subseteq\mathbb{Z}\}$ is the dual lattice of $\Gamma$. Let $\frak{f}$ be a homomorphism between two complex tori $T=V/\Gamma$ and $T^{\prime}=V^{\prime}/\Gamma^{\prime}$.  Then, there is a unique $\mathbb{C}$-linear map $F:V\rightarrow V^{\prime}$ with $F(\Gamma)\subseteq\Gamma^{\prime}$ inducing $\frak{f}$.  We call $F$ the {\it analytic representation} of $\frak{f}$, and the restriction $F|_{\Gamma}$ the {\it rational representation} of $\frak{f}$.  For the analytic representation $F:V\rightarrow V^{\prime}$ of a homomorphism $\frak{f}:T\rightarrow T^{\prime}$, the dual map $^{t}F:V^{\prime *}\to V^{*}$ associating to an antilinear functional $l\in V^{\prime^{*}}$ the antilinear functional $l\circ F\in V^{*}$ induces a homomorphism $^{t}\frak{f}: \widehat{T}^{\prime}\rightarrow \widehat{T}$ since $^{t}F( \widehat{\Gamma^{\prime}})\subseteq \widehat{\Gamma}.$ We call $^{t}\frak{f}$ the dual map of $\frak{f}$. 

 Let $X$ and $Y$ be compact Riemann surfaces of genera $g$ and $\gamma$, respectively.  Denote by $\mathcal{H}$ the space of holomorphic differentials on $X$. Set $\Omega=\text{Hom}(\mathcal{H},\mathbb{C})$.  The Jacobian variety $J(X)=\Omega/H_{1}(X,\mathbb{Z})$ is a complex torus of dimension $g$ and considering $\mathcal{H}$ of $\mathbb{C}$-antilinear forms on $\Omega$, we denote by $ \widehat{J(X)}=\overline{\mathcal{H}}/H^{1}(X,\mathbb{Z})$ the dual. 

 In order to describe $J(X)$ in terms of period matrices, choose basis $\lambda_{1},...,\lambda_{2g}$ of $H_{1}(X,\mathbb{Z})$ and $\omega_{1},...,\omega_{g}$ of $\mathcal{H}$. Let $l_{1},...,l_{g}$ be the basis of $\Omega$ dual to $\omega_{1},...,\omega_{g}$, i.e., $l_{i}(\omega_{j})=\delta_{ij}$.  Considering
 $\lambda_{j}$ as a linear form on $\mathcal{H}$, we have $\lambda_{j}=\sum_{k=1}^{g}(\int_{\lambda_{j}}\omega_{k})l_{k}$ for $j=1,...,2g$. Hence 
\begin{equation*}
\Pi_{X}=\begin{pmatrix}\int_{\lambda_{1}}\omega_{1} & ... & \int_{\lambda_{2g}}\omega_{1} \\ \vdots & & \vdots \\ \int_{\lambda_{1}}\omega_{g} & ... & \int_{\lambda_{2g}}\omega_{g} \end{pmatrix} \text{\quad }
\end{equation*}
is a period matrix for $J(X)$ with respect to these bases.  Similarly, we define $J(Y)$, $ \widehat{J}(Y)$ and $\Pi_{Y}$ for $Y$. Let $\frak{f}:J(X)\rightarrow J(Y)$ be a homomorphism.  In terms of matrices, $\frak{f}$ can be expressed as 
\begin{equation*}
A\Pi_{X}=\Pi_{Y}M \text{\quad }
\end{equation*}
with $A\in M(\gamma,g;\mathbb{C})$ and $M\in M(2\gamma,2g;\mathbb{Z})$.  Conversely, if there are matrices $A\in M(\gamma,g;\mathbb{C})$ and $M\in M(2\gamma,2g;\mathbb{Z})$ such that $A\Pi_{X}=\Pi_{Y}M$ then these matrices are matrix representations of some homomorphism $J(X)\rightarrow J(Y)$.  We also call $A$ the analytic representation of $\frak{f}$, and $M$ the rational representation of $\frak{f}$. 

 There is a canonical principal polarization on $J(X)$. Fix a homology basis $\lambda_{1},...,\lambda_{2g}$ of $H_{1}(X,\mathbb{Z})$ with an intersection matrix 
\begin{equation*}
J=\begin{pmatrix}0&I\\ -I&0\end{pmatrix} \text{\quad }
\end{equation*}
where each entry is $g\times g$ sized.  Considered as a $\mathbb{R}$-vector space, $\lambda_{1},...,\lambda_{2g}$ is a basis for $\Omega$. Denote by $E$ the alternating form on $\Omega$ with the matrix $J$ with respect to the basis $\lambda_{1},...,\lambda_{2g}$ for $\Omega$ and define a hermitian form $H:\Omega\times\Omega\rightarrow\mathbb{C}$ by 
\begin{equation*}
H(u,v)=E(iu,v)+iE(u,v). \text{\quad }
\end{equation*}
We denote by $\langle\zeta,v\rangle$, $\zeta\in\overline{\mathcal{H}},v\in\Omega$ the value of $\zeta$ at $v$. The $\mathbb{C}$-linear map $\phi_{E}:\Omega\rightarrow\overline{\mathcal{H}}$ such that $H(u,v)=\langle\phi_{E}(u),v\rangle$ induces a homomorphism, also denoted by $\phi_{E}$, 
\begin{equation*}
\phi_{E}:J(X)\rightarrow \widehat{J(X)}. \text{\quad }
\end{equation*}
The matrix $-J=J^{-1}=\begin{pmatrix}0&-I\\ I&0\\ \end{pmatrix}$ is the rational representation of $\phi_{E}$ with respect to the homology basis $\lambda_{1},...,\lambda_{2g}$ for $H_{1}(X,\mathbb{Z})$ above and the dual basis $\alpha_{1},...,\alpha_{2g}$ for $H^{1}(X,\mathbb{Z})$, i.e., $\alpha_{j}(\lambda_{k})=\delta_{jk}.$ Also, we can transport $E$ to $\overline{\mathcal{H}}$ by defining 
\begin{equation*}
E^{\prime}(\phi_{E}(u),\phi_{E}(v))=E(u,v). \text{\quad }
\end{equation*}
We obtain a bound for $\#\mathcal{I}(X)$ by counting the possible numbers of pull backs of holomorphic differentials.  Thus, we mainly deal with the dual Jacobian varieties and the endomorphisms of them.  By an {\it underlying real structure} for a $g$-dimensional complex torus $T=V/\Gamma$, we mean the real torus $\mathbb{R}^{2g}/\mathbb{Z}^{2g}$ together with a map $\mathbb{R}^{2g}/\mathbb{Z}^{2g}\rightarrow T$ induced by a linear map $\mathbb{R}^{2g}\ni x\mapsto\Pi x\in\mathbb{C}^{g}$ where $\Pi$ is a period matrix.  For Jacobian varieties, it is known that the real part $E(iu,v)$ of the hermitian $H(u,v)$ is 
symmetric positive definite. 

 Here, we define an inner product. 

\begin{definition}
On $\mathbb{R}^{2g}$ which is over the underlying real structure for $J(X)$, we define an inner product by 
\begin{equation*}
(x,y)_{X}=E^{\prime}(i \widehat{\Pi}x, \widehat{\Pi}y),
\end{equation*}
where $ \widehat{\Pi}$ is a period matrix for $ \widehat{J(X)}$. We define a norm $|| \cdot ||$ by 
\begin{equation*}
||x||=\sqrt{(x,x)_{X}}. \text{\quad }
\end{equation*}
\end{definition}

For an arbitrary $\mathfrak{g}\in End(J(X))$, put $\mathfrak{g^{\prime}}=\phi_{E}^{-1}\circ^{t}\mathfrak{g}\circ\phi_{E}$ and ${}^t \mathfrak{g}^{\prime}=\phi_{E}\circ\mathfrak{g}\circ{\phi_{E}}^{-1}.$ (We observe $(^{t}\mathfrak{g})^{\prime}={^{t}(\mathfrak{g^{\prime}})}$ holds.) This map $': End(J(X))\rightarrow End(J(X))$ (or $End( \widehat{J(X)})\rightarrow End( \widehat{J(X)}))$ is so-called the Rosati involution.  For a nonconstant holomorphic map $f:X\rightarrow Y$ denote by $\frak{f}$ the induced homomorphism between the Jacobians.  Put $\frak{f}^{\prime}=\phi_{E}^{-1}\circ^{t}\frak{f}\circ\phi_{E^{\prime}}$, where $\phi_{E^{\prime}}:J(Y)\rightarrow \widehat{J(Y)}$ is defined similar  to $\phi_{E}:J(X)\rightarrow \widehat{J(X)}$. If we denote by $J^{\prime}=\begin{pmatrix}0&I\\ -I&0\end{pmatrix}$ where each entry is $\gamma\times\gamma$, then $-J^{\prime}=J^{\prime -1}$ is the rational representation of $\phi_{E^{\prime}}:J(Y)\rightarrow \widehat{J(Y)}.$ We call $\mathfrak{F}=\frak{f}^{\prime}\circ \frak{f}(\in End(J(X)))$ the {\it endomorphism associated with} $f$. 

\vspace{0.2cm}
\begin{lemma}\label{lem1} For an arbitrary $\mathfrak{g}\in End(J(X))$, denoting by $G$ and $^{t}G^{\prime}$ the rational representations of the duals $^{t}\mathfrak{g}$ and $^{t}\mathfrak{g^{\prime}}$, respectively, we have
\begin{equation}
(^{t}Gx,^{t}Gy)_{X}=(^{t}G^{\prime}{^{t}Gx},y)_{X}=(x,^{t}G^{\prime}{^{t}Gy})_{X}, 
\end{equation}
for all $x, y\in\mathbb{R}^{2g}.$ In particular, when $\mathfrak{F}$ is the endomorphism associated with some holomorphic map $f:X\rightarrow Y$ of degree $d$, denoting by $\mathcal{F}$ the rational representation of $\mathfrak{F}$, we have
\begin{equation}
(^{t}\mathcal{F}x,^{t}\mathcal{F}y)_{X}=d(^{t}\mathcal{F}x,y)_{X}. 
\end{equation}
\end{lemma}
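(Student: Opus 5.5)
The plan is to reduce everything to a matrix computation on $\mathbb{R}^{2g}$. I first write the inner product $(\cdot,\cdot)_X$ as a Gram matrix and the Rosati involution as a matrix adjoint; the first identity then says that the Rosati involution is the adjoint for $(\cdot,\cdot)_X$. The second identity follows by combining this with the classical relation $\mathfrak{f}^{\prime}\circ\mathfrak{f}=$ multiplication by $d$, valid on the image.

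\textbf{Step 1: the Gram matrix.} The alternating form $E^{\prime}$ is transported from $E$ through $\phi_E$. On the lattice $H^1(X,\mathbb{Z})$, with respect to the basis $\alpha_j$, it is therefore represented by $\pm J$, since the rational representation of $\phi_E$ is $-J=J^{-1}$. Define the real-linear map $\widehat{\mathcal{J}}$ on $\mathbb{R}^{2g}$ by $\widehat{\Pi}\,\widehat{\mathcal{J}}x=i\widehat{\Pi}x$; this is the complex structure pulled back to $\mathbb{R}^{2g}$. Then
\[
(x,y)_X={}^t(\widehat{\mathcal{J}}x)\,(\pm J)\,y=:{}^txSy .
\]
The matrix $S$ is symmetric positive definite, since $E(iu,v)$ is.

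\textbf{Step 2: the Rosati involution is the adjoint.} Let $\mathfrak{h}={}^t\mathfrak{g}\in End(\widehat{J(X)})$ have rational representation $G$. Its Rosati image is $\mathfrak{h}^{\prime}={}^t(\mathfrak{g}^{\prime})$, obtained by conjugating ${}^t\mathfrak{h}=\mathfrak{g}$ by $\phi_E$. On rational representations, dualizing a map transposes its matrix with respect to dual bases. Hence ${}^tG^{\prime}=J\,{}^tG\,J^{-1}$, up to the sign convention. Because $G$ comes from a complex-linear map, it commutes with $\widehat{\mathcal{J}}$. Putting these together gives ${}^tG\,S=S\,{}^tG^{\prime}$, that is, $(Gx,y)_X=(x,{}^tG^{\prime}y)_X$. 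Applying this with $x$ replaced by $Gx$, and using that ${}^tG^{\prime}$ commutes appropriately, yields $(Gx,Gy)_X=({}^tG^{\prime}Gx,y)_X=(x,{}^tG^{\prime}Gy)_X$. The second equality is the symmetry of $(\cdot,\cdot)_X$ together with the self-adjointness of ${}^tG^{\prime}G$, which follows from the adjoint relation. The notation in the statement writes ${}^tG$ for the representation of ${}^t\mathfrak{g}$, and I will follow it consistently.

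\textbf{Step 3: the case of $\mathfrak{F}$.} Apply Step 2 with $\mathfrak{g}=\mathfrak{f}$, allowing a homomorphism $J(X)\to J(Y)$ and using both polarizations $\phi_E,\phi_{E^{\prime}}$. This gives
\[
({}^t\mathcal{F}x,{}^t\mathcal{F}y)_X=({}^t\mathcal{F}^{\prime}\,{}^t\mathcal{F}x,y)_X ,
\]
which reduces the claim to ${}^t\mathfrak{F}^{\prime}\circ{}^t\mathfrak{F}=d\,{}^t\mathfrak{F}$. Since $\mathfrak{F}=\mathfrak{f}^{\prime}\mathfrak{f}$ is Rosati-symmetric, $\mathfrak{F}^{\prime}=\mathfrak{F}$. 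It then remains to show $\mathfrak{F}^2=d\,\mathfrak{F}$.

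This follows from $\mathfrak{f}\circ\mathfrak{f}^{\prime}=d\cdot\mathrm{id}_{J(Y)}$. In terms of divisors, $\mathfrak{f}^{\prime}$ is the pullback $f^*:J(Y)\to J(X)$ and $\mathfrak{f}$ is the norm map $f_*$, and $f_*f^*=d$. On differentials, this is the trace identity $\mathrm{tr}_f(f^*\omega)=d\,\omega$. Then
\[
\mathfrak{F}^2=\mathfrak{f}^{\prime}(\mathfrak{f}\mathfrak{f}^{\prime})\mathfrak{f}=d\,\mathfrak{F},
\]
and dualizing gives the required identity for ${}^t\mathcal{F}$.

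The main obstacle is bookkeeping. One must check that the paper's $\mathfrak{f}^{\prime}=\phi_E^{-1}\circ{}^t\mathfrak{f}\circ\phi_{E^{\prime}}$ really is $f^*$ and not $-f^*$ or a transposed variant. One must also get the signs of $J$ right in the transport of $E$ to $E^{\prime}$. I would verify this through the intersection-form compatibility $f_*\lambda\cdot\mu=\lambda\cdot f^*\mu$.
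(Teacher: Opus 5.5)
Your proposal is correct, and it is the standard argument for this lemma, whose proof the paper defers to [T2]: the Rosati involution is the adjoint for the positive definite form $E'(i\cdot,\cdot)$, and $\mathfrak{f}\circ\mathfrak{f}'=d$ gives $\mathfrak{F}'=\mathfrak{F}$ and $\mathfrak{F}^{2}=d\,\mathfrak{F}$ (your projection-formula check does confirm $\mathfrak{f}'=f^{*}$ with the paper's conventions, with no sign). The remaining issues are cosmetic: in Step 3 you should apply Step 2 directly to $\mathfrak{g}=\mathfrak{F}\in End(J(X))$ rather than to $\mathfrak{f}$, your labelling of $G$ versus ${}^{t}G$ should be made consistent, and the remark that ${}^{t}G'$ ``commutes appropriately'' is unnecessary, since adjointness and the symmetry of $(\cdot,\cdot)_X$ already give both equalities.
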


\vspace{0.2cm}
 \begin{lemma} \label{lem2} 
 Let $f:X\rightarrow Y$ be a nonconstant holomorphic map of degree $d$, and let $\mathfrak{F}$ be the endomorphism associated with $f$. 
 Then, denoting by $\mathcal{F}$ the rational representation of $\mathfrak{F}$, we have
\begin{equation}
||^{t}\mathcal{F}x||\le d||x||, 
\end{equation}
for an arbitrary vector $x\in\mathbb{R}^{2g}$.
\end{lemma}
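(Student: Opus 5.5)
We derive Lemma \ref{lem2} from the second identity of Lemma \ref{lem1} together with the Cauchy--Schwarz inequality for the inner product $(\cdot,\cdot)_X$. This inner product is positive definite, since $E(iu,v)$ is symmetric positive definite and $E'$ is its transport by $\phi_E$. Take $y={}^t\mathcal{F}x$ in Lemma \ref{lem1}, which gives
\begin{equation*}
\|{}^t\mathcal{F}\,{}^t\mathcal{F}x\|^2=d\,({}^t\mathcal{F}x,{}^t\mathcal{F}x)_X=d\,\|{}^t\mathcal{F}x\|^2 .
\end{equation*}
This by itself does not bound $\|{}^t\mathcal{F}x\|$. The better choice is to put $y=x$ in Lemma \ref{lem1}:
\begin{equation*}
\|{}^t\mathcal{F}x\|^2=({}^t\mathcal{F}x,{}^t\mathcal{F}x)_X=d\,({}^t\mathcal{F}x,x)_X .
\end{equation*}

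Cauchy--Schwarz bounds the right-hand side by $d\,\|{}^t\mathcal{F}x\|\,\|x\|$, so
\begin{equation*}
\|{}^t\mathcal{F}x\|^2\le d\,\|{}^t\mathcal{F}x\|\,\|x\|.
\end{equation*}
If ${}^t\mathcal{F}x=0$ the claim is trivial. Otherwise we divide by $\|{}^t\mathcal{F}x\|>0$ and obtain $\|{}^t\mathcal{F}x\|\le d\|x\|$, which is the statement.

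The argument only needs the self-adjointness relation of Lemma \ref{lem1} with the factor $d$. That relation reflects $\mathfrak{f}\circ\mathfrak{f}'=[d]$ on $J(Y)$, so that $\mathfrak{F}^2=d\mathfrak{F}$ and $\mathfrak{F}$ is a symmetric idempotent up to scaling by $d$. Once Lemma \ref{lem1} is granted there is no real obstacle; the one point to check is positive definiteness of $(\cdot,\cdot)_X$, which is what justifies Cauchy--Schwarz. The argument in fact shows more: ${}^t\mathcal{F}/d$ is an orthogonal projection with respect to $(\cdot,\cdot)_X$, so its operator norm is at most $1$.
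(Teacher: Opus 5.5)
The paper does not prove this lemma itself; it defers to \cite{T2}. Your argument is correct and is the natural derivation from the preceding Lemma~\ref{lem1}: put $y=x$ to get $\|{}^t\mathcal{F}x\|^2=d\,({}^t\mathcal{F}x,x)_X$, apply Cauchy--Schwarz for the positive definite form $(\cdot,\cdot)_X$, and divide by $\|{}^t\mathcal{F}x\|$.

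One slip occurs in the first display you discard, though it plays no role in the proof. Substituting $y={}^t\mathcal{F}x$ gives $({}^t\mathcal{F}x,({}^t\mathcal{F})^2x)_X$ on the left, not $\|({}^t\mathcal{F})^2x\|^2$, and the latter actually equals $d^2\|{}^t\mathcal{F}x\|^2$.
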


Proofs for the above lemmata are in \cite{T2}.

The following lemma is the key geometric ingredient.
It provides an estimate for the number of holomorphic maps having a prescribed pullback of a holomorphic differential.
 \begin{lemma}\label{lem} Let $f_{1}:X\rightarrow Y_{1}$ be a holomorphic map of degree $d$, and let ${\mathfrak f}_{1}:J(X)\rightarrow J(Y_{1})$ be the homomorphism induced by $f_{1}$.  Take an arbitrary $u\in{^{t}{\mathfrak f}_{1}( \widehat{J(Y_{1})})}$. Then, the number of isomorphic classes of holomorphic maps $f_{i}:X\rightarrow Y_{i}$ of degree $d$ such that the dual map $^{t}{\mathfrak f}_{i}: \widehat{J(Y_{i})}\rightarrow  \widehat{J(X)}$ of the induced homomorphism ${\mathfrak f}_{i}$ satisfies $u\in{^{t}{\mathfrak f}_{i}( \widehat{J(Y_{i})})}$ is at most $\binom{2g-2}{d}\times\binom{4g-4}{d}$.
\end{lemma}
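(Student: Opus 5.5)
The plan is to turn the condition $u\in{}^{t}\mathfrak f_i(\widehat{J(Y_i)})$ into a holomorphic differential on $X$ that every admissible $f_i$ must pull back. Then I would show that each $f_i$ is pinned down, up to isomorphism, by a choice of two effective divisors of degree $d$: the first contained in a fixed divisor of degree $2g-2$, the second in a fixed divisor of degree $4g-4$. The two binomial coefficients would then be exactly the numbers of such choices.

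\emph{Step 1: fixing a differential.} Lift $u$ to $\overline{\mathcal H}$ and fix this lift. I would choose it through $f_1$: pick $\eta_1\in\overline{\mathcal H(Y_1)}$ whose pullback represents $u$, and set $\omega=f_1^{*}\eta_1$. For any admissible $f_i$, the hypothesis $u\in{}^{t}\mathfrak f_i(\widehat{J(Y_i)})$ means $u$ lies in the abelian subvariety ${}^{t}\mathfrak f_i(\widehat{J(Y_i)})$. Its tangent space is $f_i^{*}\overline{\mathcal H(Y_i)}$, so I want to conclude $\omega=f_i^{*}\eta_i$ for some holomorphic differential $\eta_i$ on $Y_i$. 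Two points need care here. The lift is only defined modulo the lattice, and this is where Lemma~\ref{lem1} and Lemma~\ref{lem2} should enter: the endomorphism $\mathfrak F_i$ acts on ${}^{t}\mathfrak f_i(\widehat{J(Y_i)})$ as multiplication by $d$, and its rational representation satisfies $\|{}^{t}\mathcal F x\|\le d\|x\|$. The aim is to choose the lift so that it lies in every such tangent space simultaneously. I also expect to use that $\omega\neq 0$ when $u\neq0$.

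\emph{Step 2: the first fiber.} Once $\omega=f_i^{*}\eta_i$, we have
\[
\operatorname{div}(\omega)=f_i^{*}\operatorname{div}(\eta_i)+R_i ,
\]
where $R_i$ is the ramification divisor. Since $\gamma>1$, $\eta_i$ has a zero $q$, so the whole fiber $f_i^{*}(q)$ is an effective sub-divisor of degree $d$ of the fixed divisor $\operatorname{div}(\omega)$, which has degree $2g-2$. This leaves at most $\binom{2g-2}{d}$ choices for the fiber $D_1=f_i^{*}(q)$.

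\emph{Step 3: the second fiber.} One fiber does not determine the map, so I would produce a second fiber inside a divisor of degree $4g-4$. The natural candidate is the quadratic differential $\omega^{2}=f_i^{*}(\eta_i^{2})$. Its divisor is fixed by $\omega$ and has degree $4g-4$. The differential $\eta_i^{2}$ has zeros at points $q'$ other than $q$, or else I would use a second pulled-back differential $\omega\,\omega'$ with $\omega'$ also forced by $u$. This gives a further fiber $D_2=f_i^{*}(q')$ with at most $\binom{4g-4}{d}$ choices.

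\emph{Step 4: the pair of fibers determines $f_i$.} This is the step I expect to be the main obstacle. The claim is that $f_i$ is determined up to isomorphism by $(D_1,D_2)$. The approach I would try first is the following. The divisors $D_1$ and $D_2$ are linearly equivalent on $X$ exactly when $q\sim q'$ on $Y_i$, which fails since $\gamma>0$. So a direct function $h$ with $\operatorname{div}h=D_1-D_2$ is not available. Instead I would compare two admissible maps $f_i,f_j$ sharing both fibers. They induce the same $\omega$, and the ratio of two independent pulled-back differentials gives a common meromorphic function. I would then apply the Castelnuovo--Severi inequality to $f_i\times f_j\colon X\to Y_i\times Y_j$. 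If the image is not birational to a graph, its genus bound (degree $d$ each, $\gamma>1$) should contradict the existence of two common fibers. Hence $f_j=h\circ f_i$ with $h$ conformal, and multiplying the two counts gives the bound $\binom{2g-2}{d}\times\binom{4g-4}{d}$.
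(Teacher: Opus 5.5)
\textbf{There is a genuine gap in Steps 3 and 4.}

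Your Step 2 is exactly how the paper obtains the factor $\binom{2g-2}{d}$. Step 1 is simpler than you make it. In the paper $u$ is effectively a vector: in the proof of Theorem \ref{thm} it comes from $x_i\in H^1(Y_i,\mathbb{Z})$ with ${}^{t}\mathfrak f_1(\widehat{\Pi}_1x_1)={}^{t}\mathfrak f_2(\widehat{\Pi}_2x_2)$, so $f_1^*\phi_1=f_i^*\phi_i$ is immediate. Lemmas \ref{lem1} and \ref{lem2} play no role, and for an honest point of the torus a common lift need not exist.

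In Step 3, $\operatorname{div}(\omega^2)=2\operatorname{div}(\omega)$, so a fiber read off from it carries nothing beyond $\operatorname{div}(\omega)$. Moreover, $\eta_i$ may have a single zero, so $D_2$ need not exist, and $u$ forces only one differential, so there is no $\omega'$.

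Worse, the claim of Step 4 is false: $\omega$ together with two common fibers over zeros does not determine $f_i$ up to isomorphism. Here is a counterexample.
\begin{itemize}
\item Take a Galois $S_3$-cover $\pi\colon X\to W$ of an elliptic curve, branched over two points with $3$-cycle monodromy, so $g(X)=5$.
\item Let $f_1\colon X\to X/\langle(12)\rangle$ and $f_2\colon X\to X/\langle(13)\rangle$. These are unramified double covers of genus-$3$ curves.
\item Let $\omega=\pi^*\psi$ for a nonzero differential $\psi$ on $W$. Then $\omega=f_i^*\phi_i$, with $\phi_i$ the pullback of $\psi$ to $Y_i$.
\item Over each branch point the $S_3$-orbit consists of two points with stabilizer $A_3$. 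It is both a $\langle(12)\rangle$-orbit and a $\langle(13)\rangle$-orbit, hence a common fiber of $f_1$ and $f_2$ lying over a zero (of order $2$) of $\phi_1$, resp.\ $\phi_2$.
\end{itemize}
Thus $f_1,f_2$ share $\omega$, $D_1$ and $D_2$, but they are not isomorphic, since their generic fibers are different orbits. Castelnuovo--Severi gives no contradiction here: $f_1\times f_2$ is birational onto its image and $5\le 2\cdot 3+2\cdot 3+1$.

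\textbf{The missing idea is the rigidity imposed by the flat structure of $\omega$.} The paper fixes only one fiber $f_i^{-1}(p_0)$ over a zero $p_0$. At a fiber point write $\omega=z^m\,dz$ and $\eta_i=z_i^n\,dz_i$. Then $z_i^{n+1}/(n+1)=z^{m+1}/(m+1)$, so the germ of $f_i$ there is determined up to finitely many rotations: at most $m+1$ configurations of the lifts of a real ray from $p_0$. The second factor $\binom{4g-4}{d}$ bounds the number of such local configurations over the whole fiber.

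Suppose two maps share $\omega$, the fiber, and these configurations. Then the local conformal map $h$ near $p_{01}$ satisfies $h\circ f_1=f_2$ near every fiber point. It continues along paths from $p_{01}$ that avoid branch points and zeros. The $f_2$-images of any two $f_1$-lifts of such a path start with the same germ and have the same $\phi_2$-development, so they coincide. Hence $h$ is a well-defined global conformal map.

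In the example above, it is precisely this local data at the second point of a shared fiber that separates $f_1$ from $f_2$. So the second binomial coefficient has to count branch choices at one fiber, not a second fiber. Your Step 4 should be replaced by this continuation argument.
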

In \cite{T2}, the corresponding bound is
$\binom{2g-2}{d}\times (2g-1)^{d},
$ which is improved here to
$\binom{2g-2}{d}\times\binom{4g-4}{d}.$

\begin{proof}
The assumption implies that there exist holomorphic differentials
$\phi_{1}
$ on \(Y_{1}\) and
$\phi_{i}
$ on \(Y_{i}\) such that 
$f_{1}^{*}\phi_{1}=f_{i}^{*}\phi_{i}.
$

Let \(p_{01}\) be a zero of \(\phi_{1}\). Then the number of possible
$f_{1}^{-1}(p_{01})
$ (counting multiplicities) that can occur is at most
$\binom{2g-2}{d}.
$ After fixing
$\phi=f_{1}^{*}\phi_{1}
$ and
$f_{1}^{-1}(p_{01}),
$ we can show that there are at most
$\binom{4g-4}{d}
$ possible isomorphic classes of holomorphic maps of degree \(d\) as follows.

Let
$f_{i}:X\to Y_{i}
$ be holomorphic maps \((i=1,2)\).
Suppose that there exist holomorphic differentials
$\phi_{1},\phi_{2}
$ on $Y_{1}$ and $Y_{2}$, respectively, such that
$f_{1}^{*}\phi_{1}=f_{2}^{*}\phi_{2},
$ and suppose that there exists a zero
$p_{01}\ (\text{resp. }p_{02})
$ of
$\phi_{1}\ (\text{resp. }\phi_{2})
$ satisfying
$f_{1}^{-1}(p_{01})=f_{2}^{-1}(p_{02}).
$ We put
$\phi=f_{1}^{*}\phi_{1}=f_{2}^{*}\phi_{2}.
$ Let
$\tilde p_{0}\in f_{1}^{-1}(p_{01})=f_{2}^{-1}(p_{02}).
$
Take a sufficiently small neighbourhood
$U_{\tilde p_{0}}
$ (resp.\ \(U_{p_{0i}}\))
of
$\tilde p_{0}
$ (resp.\ \(p_{0i}\))
so that there is no zero of \(\phi\)
(resp.\ \(\phi_{i}\))
on \(U_{\tilde p_{0}}\)
(resp.\ \(U_{p_{0i}}\))
except
$\tilde p_{0}
$ (resp.\ \(p_{0i}\)),
and that
$f_{i}(U_{\tilde p_{0}})\subset U_{p_{0i}}
\quad (i=1,2).
$ We may choose a local coordinate
$z
$ (resp.\ \(z_{i}\))
on
$U_{\tilde p_{0}}
$ (resp.\ \(U_{p_{0i}}\))
such that
$z(\tilde p_{0})=0
$ (resp.\ \(z_{i}(p_{0i})=0\))
and the differential is written as
$\phi=z^{m}dz
$ (resp.\
$\phi_{i}=z_{i}^{n_{i}}dz_{i}).
$
Since
$f_{1}^{-1}(p_{01})=f_{2}^{-1}(p_{02}),
$ we have
$n_{1}=n_{2}
$ and we denote this by \(n\) for brevity.

We consider the real lines
$\gamma_{i}:[0,a)\to U_{p_{0i}}
$ defined in the coordinates $z_i$ by
$$\gamma_{i}(t)=t\in\mathbb R\quad (i=1,2).$$

For an arbitrary
$\tilde p\in U_{\tilde p_{0}}\setminus\{\tilde p_{0}\}$,
we have

$$\int_{0}^{\tilde p} z^{m}dz
=
\int_{0}^{f_{1}(\tilde p)} z_{1}^{n}dz_{1}
=
\int_{0}^{f_{2}(\tilde p)} z_{2}^{n}dz_{2}.
$$
Hence the number of possible configurations of the lifts of
$\gamma_{1}
$ 
(and therefore also of \(\gamma_{2}\))
in
$U_{\tilde p_{0}}
$ is at most
$m+1.
$ 
Consequently, the total number of possible configurations of all lifts of
$\gamma_{1}
$ is at most
$$\binom{4g-4}{d}.
$$

Let
$\{\tilde p_{0j}\}_{j=1}^{N}
=
f_{1}^{-1}(p_{01})
=
f_{2}^{-1}(p_{02}).
$ Suppose that, for every
$\tilde p_{0j}\in f_{1}^{-1}(p_{01}),
$
 $U_{\tilde p_{0j}}\cap f_{1}^{-1}(\gamma_{1})
=
U_{\tilde p_{0j}}\cap f_{2}^{-1}(\gamma_{2}),
$ that is, the lifts of \(\gamma_{1}\) and \(\gamma_{2}\) coincide locally around each  
$\tilde p_{0j}$.

Then one easily obtains a local conformal map
$h:f_{1}(U_{\tilde p_{0j}})\to f_{2}(U_{\tilde p_{0j}})
$ satisfying
$h\circ f_{1}|_{\cup_{j}U_{\tilde p_{0j}}}
=
f_{2}|_{\cup_{j}U_{\tilde p_{0j}}}.
$
We now show that $h$ extends to a global conformal map from
\(Y_{1}\) to \(Y_{2}\).

For an arbitrary point
$p\in Y_{1},
$
we draw a curve
$c
$ from
$p_{01}
$ to \(p\) avoiding all branch points of \(f_{1}\) except possibly at
$p_{01}
$ and \(p\).
Let
$\tilde c,\tilde c'
$ be two lifts of \(c\) by \(f_{1}\).

Since $h\circ f_1$ is already well defined near each  
$\tilde p_{0j}
\quad (j=1,\dots,N)$,
it follows that
$$f_{2}(\tilde c)=f_{2}(\tilde c').
$$

Therefore  \(h\) is globally well defined on \(Y_{1}\).
It is easy to see that \(h\) is invertible.
\end{proof}

\begin{lemma}\label{count}
We have
$$\sum_{k=2}^d \binom{2g-2}{k}\binom{4g-4}{k}\le
 \binom{6g-6}{2g-2}
 \le
\left( \frac{27}{4} \right)^{2g-2}$$
for any degree $d$.
\end{lemma}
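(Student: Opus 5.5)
Both inequalities follow from standard counting. The first is Vandermonde's identity, and the second is a Stirling-free estimate of a central-type binomial coefficient via the binomial theorem.

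For the first inequality, the terms are nonnegative, so we may enlarge the range of summation to all $k\ge 0$. Set $a=2g-2$; then $4g-4=2a$. Using $\binom{2a}{k}=\binom{2a}{2a-k}$ and Vandermonde's convolution,
\[
\sum_{k=2}^{d}\binom{a}{k}\binom{2a}{k}\le\sum_{k=0}^{a}\binom{a}{k}\binom{2a}{2a-k}=\binom{3a}{2a}=\binom{6g-6}{2g-2}.
\]
Terms with $k>a$ vanish because $\binom{a}{k}=0$ there, so no restriction on $d$ is needed. Combinatorially, this counts choices of $2a$ elements from a disjoint union of an $a$-set and a $2a$-set, sorted by how many come from the $a$-set.

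For the second inequality, write $n=3a$ and use the fact that a single binomial term is bounded by the full binomial expansion, weighted so that the term $k=a$ is extracted. For any $0<p<1$,
\[
\binom{3a}{a}p^{a}(1-p)^{2a}\le\sum_{k}\binom{3a}{k}p^{k}(1-p)^{3a-k}=1.
\]
Choosing $p=1/3$ gives $\binom{3a}{a}\le 3^{a}(3/2)^{2a}=(27/4)^{a}$. Since $\binom{3a}{2a}=\binom{3a}{a}$ and $a=2g-2$, this is the claimed bound $(27/4)^{2g-2}$.

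No step is a real obstacle. The only points that need care are the symmetry rewrite that puts the sum into Vandermonde form, and the choice $p=1/3$, which optimizes the weighted bound.
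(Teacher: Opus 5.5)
Your proof is correct and follows the paper's route: the first inequality is the same extend-the-sum, symmetry, and Vandermonde argument, and your remark that $\binom{a}{k}=0$ for $k>a$ justifies the ``any $d$'' claim without the paper's appeal to $d<2g-2$. For the second inequality you reach the paper's bound $\binom{3a}{a}\le (3a)^{3a}/\bigl(a^{a}(2a)^{2a}\bigr)=(27/4)^{a}$, and your binomial-theorem argument with $p=1/3$ supplies the justification that the paper's one-line factorial display leaves out.
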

\begin{proof}

By extending the range of summation and applying a classical combinatorial identity, we can bound the sum from above as follows:
\begin{align}
\sum_{k=2}^d \binom{2g-2}{k} \binom{4g-4}{k} 
&< \sum_{k=0}^{2g-2} \binom{2g-2}{k} \binom{4g-4}{4g-4-k} \label{eq:extension} \\
&= \binom{6g-6}{4g-4} \label{eq:vandermonde} \\
&= \binom{6g-6}{2g-2}. \label{eq:symmetry}
\end{align}

Here, the strict inequality \eqref{eq:extension} follows immediately from the fact that all terms in the summation are non-negative, and the upper index satisfies $d < 2g-2$ under the given assumption $g > d > 1$. 
Thus, extending the summation to $k = 2g-2$ and including the positive terms for $k=0,1$ strictly increases the value. 

The subsequent equality \eqref{eq:vandermonde} is a direct application of Vandermonde's convolution identity, which counts the number of ways to choose $4g-4$ elements from a disjoint union of two sets with sizes $2g-2$ and $4g-4$, respectively. 
Finally, the last relation \eqref{eq:symmetry} is obtained via the standard symmetry property of binomial coefficients, $\binom{N}{R} = \binom{N}{N-R}$ with $N = 6g-6$ and $R = 4g-4$.

Next, we show that the binomial coefficient $\binom{6g-6}{2g-2}$ is less than 
or equal to $\left( \frac{27}{4} \right)^{2g-2}$. 
For simplicity, let $n = 2g-2$. 
$$
\binom{3n}{n}
=
\frac{(3n)!}{n!(2n)!}
\le
\frac{(3n)^{3n}}{n^n(2n)^{2n}}
=
\left(\frac{27}{4}\right)^n.
$$

Substituting $n = 2g-2$ into the inequality, we conclude that
\begin{equation*}
\binom{6g-6}{2g-2} \le \left( \frac{27}{4} \right)^{2g-2}.
\end{equation*}
\end{proof}

\section{ A BOUND FOR $\mathcal{I}(X)$}

 Now, we will recall the notion of successive minima which is a basic tool in the geometry of numbers (see e.g. \cite{Ca}).  In $n$-dimensional real vector space, let $\Lambda$ be a lattice, that is, the set of all points 
\begin{equation*}
x=u_{1}a_{1}+\cdot\cdot\cdot+u_{n}a_{n}\in\Lambda \text{\quad }
\end{equation*}
with integers $u_{1},...,u_{n}$, and fixed linearly independent vectors $a_{1},...,a_{n}$. Let $F(x)$ be a distance function, namely, $F(x)$ is a non-negative and continuous function with $F(tx)=tF(x) (t\ge0)$. The $k$-th successive minimum $\lambda_{k}$ of the distance function $F$ with respect to the lattice is the lower bound of the numbers $\lambda$ such that $\{||x||<\lambda\}$ contains $k$ linearly independent lattice points.  In this paper, we take $\Lambda=\{x\in\mathbb{Z}^{2g}\}$, and the distance function $F(x)$ is $||x||=\sqrt{(x,x)_{X}}.$ 

\begin{notation}
Let $a_{1},...,a_{2g}$ be linearly independent points of the lattice such that $||a_{k}||=\lambda_{k}$ for $1\le k\le2g,$ where $\lambda_{k}$ is the $k$-th successive minimum. 
\end{notation}

We now estimate the number of possibilities for the vectors
${}^{t}\mathcal F_i a_k$.
The key observation is that, under the vanishing conditions
${}^{t}\mathcal F_i a_1=\cdots=
{}^{t}\mathcal F_i a_{k-1}=0$,
the vector
${}^{t}\mathcal F_i a_k$
lies in a subspace of codimension $k-1$.
Combining this observation with a standard packing argument, we obtain the following estimate.
\begin{proposition}\label{prop}
Let $f_{i}:X\rightarrow Y_{i}$ be a nonconstant holomorphic map of degree $d_i\leq d$, 
and let $\mathcal{F}_{i}$ be the rational representation of the endomorphism associated with $f_{i}$.  Suppose that, for some $k<2g$,
\begin{equation*}
{}^{t}\mathcal{F}_{i}a_{1}=\cdot\cdot\cdot=^{t}\mathcal{F}_{i}a_{k-1}=0, 
\end{equation*}
and that  $^{t}\mathcal{F}_{i}a_{k}\ne 0$ hold.  Then the number of possible $^{t}\mathcal{F}_{i}a_{k}$
is at most 
$$
(2d+1)^{2g-k+1}.
$$
\end{proposition}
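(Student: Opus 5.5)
We bound the integer vectors $v={}^{t}\mathcal{F}_{i}a_{k}\in\mathbb{Z}^{2g}$ by a packing argument with respect to the norm $\|\cdot\|$, using Lemma~\ref{lem1} to confine $v$ to a subspace and Lemma~\ref{lem2} to bound its length.

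\textbf{Confinement to a subspace.} Write $W=\mathrm{span}_{\mathbb{R}}(a_{1},\dots,a_{k-1})$. We show that $v$ is orthogonal to $W$ for $(\ ,\ )_{X}$. By Lemma~\ref{lem1} with $\mathfrak{F}$ the endomorphism associated with $f_{i}$,
$$
(v,a_{j})_{X}=({}^{t}\mathcal{F}_{i}a_{k},a_{j})_{X}=\tfrac{1}{d_i}({}^{t}\mathcal{F}_{i}a_{k},{}^{t}\mathcal{F}_{i}a_{j})_{X}=0\qquad (j<k).
$$
The middle equality uses the symmetry of $(x,y)\mapsto({}^{t}\mathcal{F}x,y)_{X}$, which follows from the first identity of Lemma~\ref{lem1}. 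Hence $v\in W^{\perp}$, a real subspace of dimension $2g-k+1$. We intersect $\mathbb{Z}^{2g}$ with $W^{\perp}$ to get a lattice $L$ of rank at most $2g-k+1$, and $v$ is a nonzero point of $L$.

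\textbf{Length bounds.} Lemma~\ref{lem2} gives $\|v\|\le d_i\|a_{k}\|\le d\lambda_{k}$. For the packing we also need a lower bound on lattice distances in $L$. Any nonzero $w\in L$ is orthogonal to $a_{1},\dots,a_{k-1}$, so $w\notin W$ and $a_{1},\dots,a_{k-1},w$ are linearly independent. By the definition of successive minima this gives $\|w\|\ge\lambda_{k}$. Thus distinct points of $L$ are at distance at least $\lambda_{k}$ from each other.

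\textbf{Packing count.} Inside the $(2g-k+1)$-dimensional Euclidean space $W^{\perp}$, place balls of radius $\lambda_{k}/2$ around the candidate points $v\in L$ with $\|v\|\le d\lambda_{k}$. These balls are disjoint, and all lie in the ball of radius $(d+\tfrac12)\lambda_{k}$. Comparing volumes, the number of candidates is at most
$$
\left(\frac{(d+\tfrac12)\lambda_{k}}{\lambda_{k}/2}\right)^{2g-k+1}=(2d+1)^{2g-k+1}.
$$
This is the claimed bound.

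The main obstacle is the first step. One must make sure that Lemma~\ref{lem1} really gives symmetry of ${}^{t}\mathcal{F}_{i}$ with respect to $(\ ,\ )_{X}$, so that the vanishing of ${}^{t}\mathcal{F}_{i}a_{j}$ forces orthogonality. One must also check that the minimal-distance property uses $\lambda_{k}$, not $\lambda_{1}$. The latter is exactly where the orthogonal-complement confinement is needed.
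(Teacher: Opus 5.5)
Your proposal is correct and is essentially the paper's proof. You use the same three ingredients: orthogonality of ${}^{t}\mathcal{F}_{i}a_{k}$ to $a_{1},\dots,a_{k-1}$ from Lemma~\ref{lem1}, the bound $\|{}^{t}\mathcal{F}_{i}a_{k}\|\le d\lambda_{k}$ from Lemma~\ref{lem2}, and a volume-packing count in the $(2g-k+1)$-dimensional orthogonal complement. You do this for general $k$, where the paper treats $k=1,2$ and says ``by induction''.

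The symmetry you flag as the main obstacle is not needed. The second identity of Lemma~\ref{lem1} with $x=a_{k}$, $y=a_{j}$ gives the orthogonality directly.

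Your minimal-distance step, that nonzero points of $\mathbb{Z}^{2g}\cap W^{\perp}$ have norm at least $\lambda_{k}$, is the right form of the paper's remark about the induced sublattice. However, when $\lambda_{k-1}=\lambda_{k}$ the one-line appeal to the definition of successive minima does not close by itself. To fix it, take the least $j$ with $\|w\|<\lambda_{j}$. Then $a_{1},\dots,a_{j-1},w$ are $j$ linearly independent lattice vectors of norm at most $\|w\|<\lambda_{j}$, which is a contradiction.
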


\begin{proof}
In Euclidean space of $n$ dimensions, the volume $V_n(R)$ of the ball of radius $R$ is given by
$$
V_n(R)=\frac{\pi^{\frac{n}{2}}}{\Gamma (\frac{n}{2}+1)}R^n .
$$

Suppose that $^{t}\mathcal{F}_{1}a_{1}\ne 0$ and $^{t}\mathcal{F}_{2}a_{1}\ne 0$.
By Lemma {\ref{lem2}}, 
$$||^{t}\mathcal{F}_i (a_1 )||\le d_i ||a_1 ||=d_i \lambda_1 \quad (i=1,2).
$$
Since $\lambda_1$ is the minimum norm in the lattice, the difference  satisfies
$$||^{t}\mathcal{F}_1 (a_1 )-^{t}\mathcal{F}_2 (a_1 )||\ge \lambda_1
$$ if it is not $0$.
Thus we see that the number of possible $^{t}\mathcal{F}_{i}a_{1}$
is at most 
$$
\frac{V_{2g} \left( d+\frac1{2} \right) }{V_{2g} \left( \frac1{2} \right)}
=(2d+1)^{2g}.
$$

Next, suppose that
$$
{}^{t}\mathcal{F}_{i}a_{1}=0
\qquad\text{and}\qquad
{}^{t}\mathcal{F}_{i}a_{2}\neq 0.
$$
Then
$
{}^{t}\mathcal{F}_{i}a_{2}
$
is orthogonal to \(a_{1}\). Indeed, by Lemma~\ref{lem1},
$$
d\,({}^{t}\mathcal{F}_{i}a_{2},a_{1})_X
=
({}^{t}\mathcal{F}_{i}a_{2},{}^{t}\mathcal{F}_{i}a_{1})_X
=
0.
$$
Hence
$
{}^{t}\mathcal{F}_{i}a_{2}
$
lies in the \((2g-1)\)-dimensional subspace orthogonal to $a_{1}$.
Since \(\lambda_{2}\) is the first successive minimum of the induced
sublattice in this subspace, 
the number of possible $^{t}\mathcal{F}_{i}a_{2}$
is at most 
$$
\frac{V_{2g-1} \left( d+\frac1{2} \right) }{V_{2g-1} \left( \frac1{2} \right)}
=(2d+1)^{2g-1}.
$$

Proceeding by induction, we obtain the conclusion.
\end{proof}

 \textbf{Proof of Theorem \ref{thm}.} 
 We will say that $\mathcal{F}_{i}\in M(2g,2g;\mathbb{Z})$ is of the $k$-th type if $^{t}\mathcal{F}_{1}a_{1}=\cdot\cdot\cdot=^{t}\mathcal{F}_{1}a_{k-1}= 0$ and $^{t}\mathcal{F}_{1}a_{k}\ne0$.

 If we fix the genus $\gamma$ of target surfaces, then the degree $d$ of maps are $\le \frac{g-1}{\gamma-1}$.  Then we note that the number of possible types for associated endomorphisms $\mathcal{F}_{i}$ is at most $2g-2\gamma+1$ since the rank of each $\mathcal{F}_{i}$ is $2\gamma$.  
 Considering all $\gamma>1$, we see $d\leq g-1$.
 Let $\mathcal{F}_{i}$ be of the $k$-th type.  
 By Proposition \ref{prop},
  the number of possible $^{t}\mathcal{F}_{i}a_{k}$
is at most 
$$
(2d+1)^{2g-k+1}\leq (2g-1)^{2g-k+1}.
$$
  
 If ${}^t\mathcal{F}_{1}a_{k}={}^t\mathcal{F}_{2}a_{k}$, then there exist $x_{i}\in H^{1}(Y_{i},\mathbb{Z}) \,(i=1,2)$ such that $^{t}{\frak f}_{1}( \widehat{\Pi}_{1}x_{1}) = {^{t}{\frak f}_{2}( \widehat{\Pi}_{2}x_{2})},$ where $ \widehat{\Pi}_{1}$ (resp. $ \widehat{\Pi}_{2}$) is the period matrix for $ \widehat{J(Y_{1})}$ (resp. $ \widehat{J(Y_{2})})$. Applying Lemma \ref{lem}, 
  we see the number of all isomorphic classes of nonconstant holomorphic maps  satisfies, 
\begin{equation*}
\#\mathcal{I}(X)<\left\{ \left(2g-1\right)^{2g}
+\left(2g-1\right)^{2g-1}+\cdots +\left(2g-1\right)^{4}\right\} \sum_{k=2}^{g-1} \binom{2g-2}{k} \binom{4g-4}{k}. \end{equation*}
Applying Lemma \ref{count} to the right hand side, we obtain
\begin{eqnarray*}
\#\mathcal{I}(X)&\le& 
\left\{ \left(2g-1\right)^{2g}
+\left(2g-1\right)^{2g-1}+\cdots +\left(2g-1\right)^{4}\right\}  \binom{6g-6}{2g-2}\\
&\le&
\left\{ \left(2g-1\right)^{2g}
+\left(2g-1\right)^{2g-1}+\cdots +\left(2g-1\right)^{4}\right\}  \left( \frac{27}{4} \right)^{2g-2}.
\end{eqnarray*}
\qed

\vspace{0.4cm}
 \footnotesize{DEPARTMENT OF MATHEMATICS, SCIENCE TOKYO,
  OHOKAYAMA, MEGURO, TOKYO, 152-8551. JAPAN}  \\
\footnotesize{E-mail address: tanabe.m.f489@m.isct.ac.jp} 

\end{document}